\title
[Aggregate expert error]
{
Sharp bounds on aggregate expert error
}
\newcommand{\qed}{$\blacksquare$}
\newcommand{\pred}[1]{\boldsymbol{1}[#1]}
\newcommand{\R}{\mathbb{R}}
\newcommand{\N}{\mathbb{N}}
\DeclareMathOperator{\sgn}{sign}
\newcommand{\abs}[1]{\left| #1 \right|}
\newcommand{\paren}[1]{\left( #1 \right)}
\newcommand{\sqprn}[1]{\left[ #1 \right]}
\newcommand{\nrm}[1]{\left\Vert #1 \right\Vert}
\newcommand{\vertiii}[1]{{\left\vert\kern-0.25ex\left\vert\kern-0.25ex\left\vert #1 
    \right\vert\kern-0.25ex\right\vert\kern-0.25ex\right\vert}}
\newcommand{\tv}[1]{\nrm{#1}_{\textup{\tiny\textsf{TV}}}}
\newcommand{\opt}{^{\textup{\tiny\textsf{OPT}}}}
\newcommand{\mexp}{\mathbb{E}}
\newcommand{\E}{\mathop{\mexp}}
\renewcommand{\P}{\mathbb{P}}
\newcommand{\eps}{\varepsilon}
\newcommand{\beq}{\begin{eqnarray*}}
\newcommand{\eeq}{\end{eqnarray*}}
\newcommand{\beqn}{\begin{eqnarray}}
\newcommand{\eeqn}{\end{eqnarray}}
\newcommand{\ent}[1][]{%
\ifthenelse{\isempty{#1}}{%
\mathrm{H}
}{
\mathrm{H}^{(#1)}
}}
\newcommand{\loch}[1][]{%
\ifthenelse{\isempty{#1}}{%
\mathrm{h}
}{
\mathrm{h}^{(#1)}
}}
\newcommand{\mathe}{\mathrm{e}}
\newcommand{\hide}[1]{}
\renewcommand{\set}[1]{\left\{ #1 \right\}}
\newcommand{\Ber}{\operatorname{Ber}}
\newcommand{\track}[1]{
\colorlet{dcsaved}{.}
\color{red}
#1
\color{dcsaved}
}
\renewcommand{\track}[1]{#1}
\begin{document}

\maketitle

\begin{abstract}
We revisit the classic problem of aggregating
binary advice from conditionally independent experts,
also known as the Naive Bayes setting.
Our quantity of interest is 
the
error probability
of the optimal decision rule.
In the 
case
of
symmetric 
errors
(sensitivity = specificity),
reasonably tight bounds on the optimal error probability
are known. In the general asymmetric case, we are not
aware of any nontrivial estimates on this quantity.
Our contribution consists of sharp
upper and lower bounds on the
optimal error probability in the general case,
which recover and sharpen the best known results
in the symmetric special case.
Additionally, our bounds are apparently
the first to take the bias into account.
Since this 
turns out to be closely connected to bounding
the total variation
distance between two product distributions,
our results also have bearing on this important and challenging problem.
\end{abstract}

\begin{keywords}
experts, hypothesis testing, Neyman-Pearson lemma , naive Bayes
\end{keywords}

\section{Introduction}

Consider the following 
decision-theoretic setting.
A 
parameter $\theta\in(0,1)$
is fixed
and
a random bit $Y\in\set{0,1}$
is drawn 
according to Bernoulli with bias $\theta$:
that is, $\theta=\P(Y=1)=1-\P(Y=0)$.
Conditional on $Y$, the 
$\set{0,1}$-valued
variables
$X_1,X_2,\ldots,X_n$
are drawn 
independently
according to
\beqn
\P(X_i=1|Y=1) &=& \psi_i, 
\label{eq:psi}
\\
\label{eq:eta}
\P(X_i=0|Y=0) &=& \eta_i
\eeqn
for some collection of parameters $\psi,\eta\in(0,1)^n$.
The $\psi_i$ and $\eta_i$
are classically known as {\em sensitivity}
and {\em specificity}, respectively.
An agent who knows the values of $\track{\theta},\psi,\eta$ 
gets to
observe $X=(X_1,\ldots,X_n)$
and wishes to infer the most likely $Y$ conditional on $X$.
A decision
rule
$f:\set{0,1}^n\to\set{0,1}$
that minimizes
the 
{\em error probability} $\P(f(X)\neq Y)$
may be found
in
\citet[Eqs. (11), (12)]{boaz-ranking2014}:\footnote{
The bound therein was stated for the case $\theta=1/2$
---
i.e., without the $\log\frac{\theta}{1-\theta}$
term. We rederive the full expression for completeness in
Section \ref{sec:pf-errp}.
}
\beqn
\label{eq:fopt}
f\opt
:X \mapsto
\sgn\paren{
\log\frac{\theta}{1-\theta}
+
\sum_{i=1}^n 
(2
X_i
-1)
\log\alpha_i+\log\beta_i
},
\eeqn
where
\beq
\alpha_i=\frac{\psi_i\eta_i}{(1-\psi_i)(1-\eta_i)},
\quad \beta_i=\frac{\psi_i(1-\psi_i)}{\eta_i(1-\eta_i)}
,
\eeq
and $\sgn$,
along with the rest of our 
notation,
is defined below.

The 
main quantity of interest 
in this note
is the
optimal error probability $\P(f\opt(X)\neq Y)$.
We obtain improved, sharp bounds on this quantity
in the symmetric ($\psi=\eta$) and asymmetric cases.
It will turn out that
estimating
$\P(f\opt(X)\neq Y)$
is closely related to computing
the total variation
distance between two product distributions
---
and thus
our results also have bearing on this important and 
computationally
challenging 
problem.

\paragraph{Motivation.}
The Neyman-Pearson Lemma (see \eqref{eq:NP})
lies at the heart of decision theory and hypothesis
testing, as it provides an optimal 
risk-minimizing
strategy. Our results
continue a line of work that analyzes the {\em performance} of this optimal strategy. 
Plans for future work include finite-sample
guarantees based on the spectral estimates of
\citet{boaz-ranking2014}.

\paragraph{Definitions.}
The
{\em balanced accuracy} 
is defined as
$\pi_i=(\psi_i+\eta_i)/2$.
We will
consistently use the notation 
$\bar\varphi:=1-\varphi$
for all expressions $\varphi$;
thus, in particular $\bar p=1-p$
and $\overline{1-p}=p$.
For $p\in(0,1)$, we write $\Ber(p)$
to denote the Bernoulli measure on $\set{0,1}$;
that is, $\Ber(p)(0)=\bar p$,
$\Ber(p)(1)=p$.
For $n\in\N$ and $p=(p_1,\ldots,p_n)\in(0,1)^n$, 
$\Ber(p)$ denotes
the product of $n$ Bernoulli distributions with parameters $p_i$:
\beq
\Ber(p) &:=& \Ber(p_1) \otimes \Ber(p_2)\otimes \ldots \otimes \Ber(p_n).
\eeq
Thus, $\Ber(p)$ is a probability measure on $\set{0,1}^n$,
with
\beq
\Ber(p)(x) 
= \prod_{i=1}^n p_i^{x_i}(1-p_i)^{1-x_i}
= \prod_{i=1}^n p_i^{x_i}\bar p_i^{\bar x_i},
\qquad
x\in\set{0,1}^n,
\eeq
We write $[n]:=\set{1,\ldots,n}$
and use
standard vector norm notation
$\nrm{w}_p^p=\sum_{i\in[n]}|w_i|^p$
for $w\in\R^n$.
For probability measures $P,Q$ on a finite set $\Omega$,
their total variation distance is
\beq
\tv{P-Q}:=\frac12\nrm{P-Q}_1=\frac12\sum_{x\in\Omega}|P(x)-Q(x)|.
\eeq
We will make 
use of 
Scheff\'e's identity
\citep[Lemma 2.1]{tsybakov09}:
\beqn
\label{eq:lecam-scheffe}
\nrm{P\wedge Q}_1
\;=\;
1-\tv{P-Q}
,
\eeqn
where $u\wedge v=\min\set{u,v}$
and 
$\sqrt{PQ}$,
$P\wedge Q$
are shorthands 
for the measures
on $\Omega$ given by
$\sqrt{P(x)Q(x)}$
and
$P(x)\wedge Q(x)$
respectively. 
For $t\in\R
$, 
$\sgn(t):=
\pred{t\ge0}
-
\pred{t<0}
$.

\begin{remark}
\label{rem:ties}
The issue of optimally breaking ties 
in \eqref{eq:fopt}
is
somewhat delicate and is exhaustively addressed
in \citet[Eq. (2.7)]{KonPin2019}.
Fortunately, although 
there may be several optimal decision rules,
they all share the same
minimum probability of error,
which depends continuously
on the parameters $\theta$, $\psi$, $\eta$.
Thus, we can always infinitesimally perturb
these so as to avoid ties, and assume no ties
henceforth.
\end{remark}

\section{Background and related work}
We refer the reader to \citet{boaz-ranking2014}
and \citet{BerendK15}
for a detailed background and literature review of this problem.
\citeauthor{boaz-ranking2014} 
and
\citet{DBLP:conf/nips/ZhangCZJ14}
proposed a spectral method for
inferring the accuracy of the experts from unsupervised data only.
Follow-up works include
\citet{
DBLP:conf/aistats/JaffeNK15,
DBLP:conf/aistats/JaffeFNJK16,
DBLP:conf/icml/ShahamCDJNCK16,
DBLP:conf/aistats/TenzerDNBK22}.

For the case of {\em symmetric} experts with 
$\psi=\eta=:p$
and $\theta=1/2$,
the optimal rule $f\opt$ given in \eqref{eq:fopt}
reduces to $f\opt(X)=\sgn\paren{
\sum_{i=1}^n w_iX_i}$,
where
$w_i:=\log\frac{p_i}{\bar p_i}$.
\citet{BerendK15}
showed that
\beqn
\label{eq:err=1-tv}
\P(f\opt\neq Y) &=& 
\frac12
\nrm{\Ber(p)\wedge\Ber(\bar p)}_1
\eeqn
and, putting
$
\Phi=\sum_{i=1}^n \paren{p_i-\frac12}w_i
$,
Theorem 1 therein
states that
\beqn
\label{eq:BK-main}
\frac{3}
{4[1+\exp(2\Phi+4\sqrt\Phi)]}
\;\le\;
\P(f\opt\neq Y)
\;\le\;
\exp(-\Phi/2)
.
\eeqn
Follow-up works include
\citet{pmlr-v48-gaoa16}
and
\citet{MANINO2019356}.
In particular,
\citeauthor[(Theorem 1, Theorem 3)]{MANINO2019356}
showed\footnote{
\citeauthor{MANINO2019356}
acknowledge
the priority of 
\cite{pmlr-v48-gaoa16}
for
both bounds in \eqref{eq:manino},
improving their constant in the lower bound
from
$0.25$ to $0.36$.
} that
\beqn
\label{eq:manino}
0.36\cdot2^n\sqrt{
\prod_{i=1}^np_i\bar p_i
}
\cdot
\exp\paren{-\frac12\sqrt{\sum_{i=1}^n w_i^2}}
\le
\P(f\opt\neq Y)
\le
\frac12
\cdot2^n
\sqrt{
\prod_{i=1}^np_i\bar p_i
}
\eeqn
and further demonstrated that
\eqref{eq:manino} sharpens both estimates in
\eqref{eq:BK-main}.

\section{Main results}
We begin with an 
analog of \eqref{eq:err=1-tv}
generalized in two ways:
the experts are neither assumed to
be symmetric
(sensitivity = specificity)
nor unbiased ($\theta=1/2$)
--- and in particular, $\theta$
explicitly figures in the expressions.

\begin{theorem}
\label{thm:errp}
For 
conditionally independent
experts 
as in
(\ref{eq:psi},\ref{eq:eta})
with sensitivities $\psi
$
and specificities $\eta
$,
the decision rule $f\opt$ in \eqref{eq:fopt} satisfies
\beq
\P(f\opt\neq Y) &=& 
\nrm{
\track{\theta}
P \wedge 
\track{\bar\theta}
Q
}_1
,
\eeq
where $P = \Ber(\psi)$
and $Q = \Ber(\bar{\eta})$.
\end{theorem}

Next, we
provide 
an
upper
bound
on
$f\opt$ in terms of the 
balanced accuracy $\pi_i=(\psi_i+\eta_i)/2$:
\begin{theorem}
\label{thm:ub}
Under the conditions of
Theorem \ref{thm:errp},
\beq
\P(f\opt\neq Y) &\le&
\sqrt{
\track{\theta \bar{\theta}}
\prod_{i=1}^n 
(\psi_i+\eta_i)
(2-\psi_i-\eta_i)
}
=
2^n
\sqrt{
\track{\theta \bar{\theta}}
\prod_{i=1}^n
\pi_i\bar\pi_i
}
.
\eeq
\end{theorem}
The above
sharpens 
the upper bound
for the symmetric case $\psi=\eta=p$
in \eqref{eq:manino} for asymmetric bias
(while recovering it for $\theta = 1/2$).
An additional interesting limiting case is
where $\psi_i=\bar{\eta_i}$ for all $i\in[n]$.
In this case, 
the experts contribute nothing,
and our upper bound evaluates to $\sqrt{\theta \bar{\theta}}$.
While this gives the exact error for $\theta = \frac 12$,
it is loose for $\theta$ close to $0$ or $1$.
At the other extreme, if
at least one of the $\pi_i\in\set{0,1}$
(which can only happen if the corresponding
$\psi_i=\eta_i=\pi_i$), the bound evaluates to $0$, as it should.

Our next result is a lower bound on 
the error probability:
\begin{theorem}
\label{thm:lb}
Under the conditions of
Theorem \ref{thm:errp},
\beq
\P(f\opt\neq Y) 
&\ge&
\track{\min \{\theta,\bar{\theta}\}}
\cdot 2^n
\sqrt{
\prod_{i=1}^n
\pi_i\bar\pi_i
}
\cdot
\exp\paren{-\frac12{\sum_{i=1}^n |\gamma_i|}}
,
\eeq
where $\gamma_i=\log(\pi_i/\bar\pi_i)$.
\end{theorem}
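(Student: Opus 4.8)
The plan is to start from the exact identity of Theorem~\ref{thm:errp}, $\P(f\opt\neq Y)=\tfrac12\nrm{\Ber(\psi)\wedge\Ber(\bar\eta)}_1$, and to lower-bound the affinity of these two product measures by the product of the coordinatewise affinities. First I would record the elementary fact that for nonnegative reals $u_1,\dots,u_n,v_1,\dots,v_n$ one has $\min\set{\prod_i u_i,\prod_i v_i}\ge\prod_i\min\set{u_i,v_i}$: whichever of the two products is smaller already dominates $\prod_i\min\set{u_i,v_i}$ factor by factor. Applying this with $u_i=\psi_i^{x_i}\bar\psi_i^{\bar x_i}$ and $v_i=\bar\eta_i^{x_i}\eta_i^{\bar x_i}$ yields, for every $x\in\set{0,1}^n$, $\Ber(\psi)(x)\wedge\Ber(\bar\eta)(x)\ge\prod_{i=1}^n\bigl(\Ber(\psi_i)(x_i)\wedge\Ber(\bar\eta_i)(x_i)\bigr)$; summing over $x$ and distributing the product over the sum gives $\nrm{\Ber(\psi)\wedge\Ber(\bar\eta)}_1\ge\prod_{i=1}^n\nrm{\Ber(\psi_i)\wedge\Ber(\bar\eta_i)}_1$. (This is the well-known supermultiplicativity of the affinity over product measures, equivalently $\tv{P-Q}\le 1-\prod_i(1-\tv{P_i-Q_i})$.)

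It then remains to identify each coordinatewise factor with the advertised expression. By Scheff\'e's identity \eqref{eq:lecam-scheffe} (or directly), $\nrm{\Ber(a)\wedge\Ber(b)}_1=1-|a-b|$, and since $\psi_i-\bar\eta_i=\psi_i+\eta_i-1=2\pi_i-1$ we obtain $\nrm{\Ber(\psi_i)\wedge\Ber(\bar\eta_i)}_1=1-|2\pi_i-1|=2\min\set{\pi_i,\bar\pi_i}$. A one-line computation then checks $2\min\set{\pi_i,\bar\pi_i}=2\sqrt{\pi_i\bar\pi_i}\,e^{-|\gamma_i|/2}$: since $|\gamma_i|=\log\bigl(\max\set{\pi_i,\bar\pi_i}/\min\set{\pi_i,\bar\pi_i}\bigr)$ we have $e^{-|\gamma_i|/2}=\sqrt{\min\set{\pi_i,\bar\pi_i}/\max\set{\pi_i,\bar\pi_i}}$, and multiplying by $2\sqrt{\pi_i\bar\pi_i}=2\sqrt{\min\set{\pi_i,\bar\pi_i}\cdot\max\set{\pi_i,\bar\pi_i}}$ collapses to $2\min\set{\pi_i,\bar\pi_i}$. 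Taking the product over $i\in[n]$ and prepending the leading factor $\tfrac12$ yields precisely $\P(f\opt\neq Y)\ge\tfrac12\cdot2^n\sqrt{\prod_{i=1}^n\pi_i\bar\pi_i}\cdot\exp\paren{-\tfrac12\sum_{i=1}^n|\gamma_i|}$.

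I do not expect a genuine obstacle: the only non-bookkeeping step is the pointwise minimum inequality that drives the tensorization, and everything after it is algebra. The points worth stating carefully are that $p_Y$ may be suppressed by Remark~\ref{rem:py}; that in the degenerate case $\pi_i\in\set{0,1}$ for some $i$ both sides vanish, consistent with the discussion after Theorem~\ref{thm:ub}; and that this direct argument is already tight at $n=1$, where $\P(f\opt\neq Y)=\min\set{\pi_1,\bar\pi_1}$ equals the bound. One could alternatively route through a Hellinger-type estimate $\nrm{P\wedge Q}_1\ge\tfrac12\nrm{\sqrt{PQ}}_1^2$, but that costs a constant factor, so it is worth noting that the coordinatewise-affinity route is the sharp one.
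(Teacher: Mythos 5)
Your proof is correct and follows the same high-level architecture as the paper (tensorize the affinity of the two product measures, then compute the per-coordinate contribution), but it deviates in two places worth noting. First, where the paper cites Lemma~\ref{lem:min-tens}, which is itself derived from a known total-variation product inequality plus Scheff\'e's identity, you prove the tensorization $\nrm{P\otimes Q\wedge P'\otimes Q'}_1\ge\nrm{P\wedge P'}_1\nrm{Q\wedge Q'}_1$ directly from the pointwise fact $\min\{\prod_i u_i,\prod_i v_i\}\ge\prod_i\min\{u_i,v_i\}$; this is shorter and self-contained, at the cost of not exhibiting the connection to the TV tensorization bound that the paper highlights. Second, you observe that the per-coordinate step is actually an \emph{equality}: $\nrm{\Ber(\psi_i)\wedge\Ber(\bar\eta_i)}_1=1-|\psi_i-\bar\eta_i|=2(\pi_i\wedge\bar\pi_i)$, whereas the paper invokes the (one-sided) inequality \eqref{eq:stumin}; a short case check shows \eqref{eq:stumin} holds with equality for $s,t\in[0,1]$, so the two routes coincide, but your formulation makes clear that the only slack in Theorem~\ref{thm:lb} comes from the tensorization step, not from the per-coordinate estimate. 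The remaining algebra matching $2(\pi_i\wedge\bar\pi_i)=2\sqrt{\pi_i\bar\pi_i}\,e^{-|\gamma_i|/2}$ is exactly the paper's identity \eqref{eq:uvmin} and is handled correctly.
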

\track{
Note that the factor
$\sqrt{\theta \bar{\theta}}$
in the upper bound cannot be sharpened
to match the 
factor
$\min \set{\theta,\bar{\theta}}$
in the lower bound.
This is demonstrated by taking
$n=1$ and $\psi_1=\eta_1=\theta\neq1/2$.
In this case, 
$ \P(f\opt\neq Y) = \theta$,
while such a putative upper bound
would evaluate to $2\theta\sqrt{\theta\bar\theta}
<\theta$.
}

In the symmetric case,
the lower bound can be sharpened:
\begin{theorem}
\label{thm:lb-symm}
Under the conditions of
Theorem \ref{thm:errp},
where
also $\psi=\eta=p$,
\beq
\P(f\opt\neq Y) 
&\ge&
\track{\min \set{\theta,\bar{\theta}} }
\cdot 2^n
\sqrt{
\prod_{i=1}^n
p_i\bar p_i
}
\cdot
\exp\paren{-\frac12{\sqrt{\sum_{i=1}^n w_i^2}}}
,
\eeq
where $w_i=\log(p_i/\bar p_i)$.
\end{theorem}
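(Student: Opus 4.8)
The plan is to combine Theorem~\ref{thm:errp} with the elementary identity $\min(a,b)=\sqrt{ab}\,\exp(-\tfrac12|\log(a/b)|)$ (for $a,b>0$) and two applications of Jensen's inequality; the feature that makes the symmetric case sharper than Theorem~\ref{thm:lb} is that under $\psi=\eta=p$ the geometric mean $\sqrt{P(x)Q(x)}$ does not depend on $x$. Concretely, set $P=\Ber(p)$ and $Q=\Ber(\bar p)$, so that Theorem~\ref{thm:errp} (equivalently \eqref{eq:err=1-tv}) gives $\P(f\opt\neq Y)=\tfrac12\nrm{P\wedge Q}_1$. Applying the $\min$ identity pointwise, $P(x)\wedge Q(x)=\sqrt{P(x)Q(x)}\,\exp\paren{-\tfrac12\abs{\log(P(x)/Q(x))}}$, and a one-line computation gives $P(x)Q(x)=\prod_{i=1}^n p_i\bar p_i$ (independent of $x$) together with $\log(P(x)/Q(x))=\sum_{i=1}^n(2x_i-1)w_i$, where $w_i=\log(p_i/\bar p_i)$. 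Summing over $x\in\{0,1\}^n$ and substituting the signs $\sigma_i=2x_i-1$ turns the uniform sum into an expectation over i.i.d.\ Rademacher $\bsigma=(\sigma_1,\dots,\sigma_n)$:
\[
\P(f\opt\neq Y)=\frac12\cdot 2^n\sqrt{\prod_{i=1}^n p_i\bar p_i}\cdot\mexp_{\bsigma}\exp\paren{-\tfrac12\abs{\sum_{i=1}^n\sigma_i w_i}}.
\]
It therefore suffices to prove $\mexp_{\bsigma}\exp(-\tfrac12|\langle\bsigma,w\rangle|)\ge\exp(-\tfrac12\nrm{w}_2)$.

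For this last inequality I would apply Jensen twice. Convexity of $t\mapsto e^{-t/2}$ gives $\mexp_{\bsigma}\exp(-\tfrac12|\langle\bsigma,w\rangle|)\ge\exp\paren{-\tfrac12\,\mexp_{\bsigma}|\langle\bsigma,w\rangle|}$; then concavity of the square root (equivalently Cauchy--Schwarz) gives $\mexp_{\bsigma}|\langle\bsigma,w\rangle|\le\sqrt{\mexp_{\bsigma}\langle\bsigma,w\rangle^2}=\sqrt{\sum_{i=1}^n w_i^2}=\nrm{w}_2$, using $\mexp[\sigma_i\sigma_j]=\pred{i=j}$ (independence and zero mean). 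Composing the two inequalities gives the claimed bound after multiplying back the factor $\tfrac12\cdot 2^n\sqrt{\prod_i p_i\bar p_i}$.

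I do not anticipate a genuine obstacle: once the reduction to the Rademacher expectation is set up the argument is two lines, and the only thing to watch is the routine identity $\mexp\langle\bsigma,w\rangle^2=\nrm{w}_2^2$. What is worth emphasizing is \emph{why} this $\ell_2$-type estimate is not available in the asymmetric setting of Theorem~\ref{thm:lb}: with $P=\Ber(\psi)$ and $Q=\Ber(\bar\eta)$ the geometric mean $\sqrt{P(x)Q(x)}$ genuinely varies with $x$ unless $\psi=\eta$, so it cannot be pulled out of the sum, and the resulting expression no longer reduces to a single Rademacher expectation whose logarithm is controlled by $\nrm{w}_2$ in place of $\sum_i|\gamma_i|$.
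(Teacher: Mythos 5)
Your proof is correct and is essentially the paper's proof, only rephrased in Rademacher variables: where you substitute $\sigma_i = 2x_i-1$ and observe $\mexp[\sigma_i\sigma_j]=\pred{i=j}$, the paper keeps $Z\sim\Unif\set{0,1}^n$ and notes that the cross-terms $\mexp\log\frac{P_i(Z_i)}{\bar P_i(Z_i)}\log\frac{P_j(Z_j)}{\bar P_j(Z_j)}$ vanish by independence and zero mean, but both arguments use the same min-identity \eqref{eq:uvmin}, pull out the constant $\sqrt{P(x)Q(x)}=\sqrt{\prod_i p_i\bar p_i}$, apply Jensen with $t\mapsto e^{-t/2}$, and then Cauchy--Schwarz to get from $\mexp|\cdot|$ to $\sqrt{\mexp(\cdot)^2}=\nrm{w}_2$.
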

Since
$\gamma=w$
in the symmetric case and
$\nrm{w}_2\le\nrm{w}_1\le\sqrt{n}\nrm{w}_2$,
the bound in
Theorem \ref{thm:lb-symm} is indeed significantly
sharper than that in Theorem \ref{thm:lb}.
To illustrate sharpness, consider
the case where $p_i=1/2$,
for all $i\in[n]$.
In this case, the bound evaluates to $\min \set{\theta,\bar{\theta}}$,
which is the exact value of 
$\P(f\opt\neq Y)$.
\begin{remark}
This bound is sufficiently sharp to yield the 
bound
$\tv{\Ber(p)-\Ber(\bar p)}
\le
\nrm{p-\bar p}_2
$
with the optimal constant $1$,
\citep[Theorem 3]{ak-tv24}.
\end{remark}

\paragraph{Tightness and counterexamples.}
In this subsection, we take
$\theta = 1/2$.
Theorem \ref{thm:ub} is loose in the regime $n=1$, $p=\eps$
for small $\eps$;
here,
$\P(f\opt\neq Y)\sim\eps$,
while the bound is $\sim\sqrt\eps$.
One might be tempted to improve the $\nrm{\gamma}_1$
appearing in the bound of
Theorem \ref{thm:lb} to the sharper value $\nrm{\gamma}_2$.
Unfortunately, that sharper bound does not hold.
Indeed, take $n=2$,
$\psi=(1,0)$,
and
$\eta=(1-\eps,\eps)$
for $\eps\in(0,1)$.
Then
$\pi=(1-\eps/2,\eps/2)$
and
$\gamma=
(\log(2/\eps-1),\log(\eps/(2-\eps))
$.
It is straightforward to verify that
$
\nrm{\Ber(\psi)\wedge\Ber(\bar \eta)}_1
=\eps^2
$ and that
\beq
\sqrt{
\prod_{i=1}^n
\pi_i\bar\pi_i
}
\mathe^{-\frac12\nrm{\gamma}_2}
&=&
\frac\eps2\paren{1-\frac\eps2}
\exp\paren{-\frac{
{
\log(2/\eps-1)
}}
{\sqrt2}
}
\\
&=&
\frac\eps2\paren{1-\frac\eps2}
\paren{\frac{\eps}{2-\eps}}^{1/\sqrt2}
=:f(\eps).
\eeq
Since $f(\eps)/\eps^2\to\infty$
as $\eps\downarrow0$, we conclude that the
conjectural bound fails to hold.

We can also exhibit a regime in which Theorem \ref{thm:lb-symm}
is not tight. Take $n=2$ and $p=(\eps,\eps)$ for $0<\eps<1/2$.
Then
$
\nrm{\Ber(p)\wedge\Ber(\bar p)}_1
=2\eps
$,
$w=(\log(\eps/(1-\eps),\log((1-\eps)/\eps)$, and
\beq
\sqrt{
\prod_{i=1}^n
p_i\bar p_i
}
\mathe^{-\frac12\nrm{w}_2}
&=&
\eps\paren{1-\eps}
\exp\paren{-\frac{
{
\log(1/\eps-1)
}}
{\sqrt2}
}
\\
&=&
\eps\paren{1-\eps}
\paren{\frac{\eps}{1-\eps}}^{1/\sqrt2}
=:g(\eps).
\eeq
Since $g(\eps)/\eps\to0$ as $\eps\downarrow0$,
the bound is quite loose in this regime.

\paragraph{Algorithmic aspects.}
\citet{DBLP:conf/ijcai/0001GMMPV23}
showed that for general $p,q\in[0,1]^n$, it is hard to compute 
$\tv{\Ber(p)-\Ber(q)}$ exactly.
\citet{FengApproxTV23} gave an efficient randomized algorithm for obtaining a $1\pm\eps$
multiplicative approximation with confidence $\delta$, in time $O(\frac{n^2}{\eps^2}\log\frac1\delta)$;
this was later derandomized by
\citet{Feng24Deterministically}.
Since
our results approximate 
$\nrm{P\wedge Q}_1
=
1-\tv{P-Q}
$, they are not directly comparable. Note also that
our bounds in Theorems \ref{thm:ub}, \ref{thm:lb}
are stated in terms of $p-q$ in simple, analytically tractable closed formulas. Still, as discussed above, certain gaps between the upper and lower bounds persist, and one is led to wonder to what extent these are due to computational hardness obstructions.

\section{Proofs}
We maintain our convention $\bar\varphi=1-\varphi$
for all expressions $\varphi$.
\subsection{Proof of Theorem \ref{thm:errp}}

\label{sec:pf-errp}
\noindent
The Neyman-Pearson lemma \citep[Theorem 11.7.1]{MR2239987}
implies that $f\opt$ must satisfy
\beqn
\label{eq:NP}
\P(f\opt(X)=Y|X=x)
&\ge&
\P(f\opt(X)\neq Y|X=x),
\qquad
x\in\set{0,1}^n.
\eeqn
By the Bayes formula,
an equivalent condition
is that
$f\opt(x)=1$
if and only if
\beqn
\label{eq:NP-prod}
\theta
\prod_{i\in A}\psi_i
\prod_{i\in B}\bar\psi_i
&\ge&
\bar \theta
\prod_{i\in A}
\bar\eta_i
\prod_{i\in B}\eta_i,
\eeqn
where $A,B\subseteq[n]$
are given by $A=\set{i\in[n]:x_i=1}$
and $B=\set{i\in[n]:x_i=0}$.
 Taking logarithms, \eqref{eq:NP-prod} is equivalent to
 \beqn
 \label{eq:NP-log}
 \log\frac{\theta}{\bar \theta}
 +
 \sum_{i=1}^n x_i\log\frac{\psi_i}{\bar\eta_i}
 +
 \sum_{i=1}^n \bar x_i\log\frac{\bar\psi_i}{\eta_i}
 \ge 0;
 \eeqn
 this is easily seen to be
 equivalent to 
 \eqref{eq:fopt}.
Now,
\beq
\P(f\opt(X)\neq Y)
=
\theta
\P(f\opt(X)\neq Y|Y=1)
+
\bar \theta
\P(f\opt(X)\neq Y|Y=0)
.
\eeq
Conditional on $Y=1$,
define the random variables
$Z_i=\pred{X_i=Y}$
and note that the
$(Z_1,\ldots,Z_n)$
are jointly distributed according
to $P=\Ber(\psi)$.
Putting $Q=\Ber(\bar\eta)$,
\eqref{eq:NP-prod}
implies 
that 
when $Y=1$,
$f\opt$
makes a mistake on $x\in\set{0,1}^n$
precisely\footnote{
As per Remark~\ref{rem:ties},
there is no loss of generality in assuming
no ties.
} when
$\theta P(x)<\bar{\theta} Q(x)$, whence
\beqn
\label{eq:mu1}
\P(f\opt(X)\neq Y|Y=1)
&=&
\sum_{x\in\set{0,1}^n}
P(x)
\pred{\theta P(x)<\bar{\theta} Q(x)}.
\eeqn
A similar analysis shows that
\beqn
\label{eq:nu0}
\P(f\opt(X)\neq Y|Y=0)
=
\sum_{x\in\set{0,1}^n}
Q(x)
\pred{\theta P(x) \ge \bar{\theta} Q(x)}.
\eeqn
Since
$u\pred{u<v}
+
v\pred{v\le u}
=
u\wedge v
$,
we have
\beq
\P(f\opt(X)\neq Y)
&=&
\sum_{x\in\set{0,1}^n}
\theta P(x)\wedge \bar{\theta} Q(x)
=
\nrm{\theta P \wedge \bar{\theta} Q }_1.
\eeq
which
finishes the proof.
\qed

\subsection{Proof of Theorem \ref{thm:ub}}
The following result 
may be of independent interest. Only the upper bound is used in this paper.
\begin{lemma}
\label{lem:ub-lb-pq}
For $p,q\in(0,1)^n$,
let $P,Q$ be two probability measures on
$\set{0,1}^n$ given by
$P=\Ber(p)$
and
$Q=\Ber(q)$.
Then
\beq
\sqrt{
\prod_{i=1}^n 
\frac{1-(p_i-q_i)^2}
2
}
\;\le\;
\sum_{x\in\set{0,1}^n}
\sqrt{P(x) Q(x)}
\;\le\;
\sqrt{
\prod_{i=1}^n 
[1-(p_i-q_i)^2]
}
.
\eeq
\end{lemma}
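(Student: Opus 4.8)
The plan is to exploit the fact that the Bhattacharyya affinity $\sum_{x}\sqrt{P(x)Q(x)}$ tensorizes over product measures; this reduces the whole claim to a single-coordinate inequality that is then settled by squaring.

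First I would record the factorization identity. Writing $P(x)Q(x)=\prod_{i=1}^n p_i^{x_i}\bar p_i^{\bar x_i}q_i^{x_i}\bar q_i^{\bar x_i}$, pulling the square root inside the product, and interchanging the sum over $x\in\set{0,1}^n$ with the product over coordinates gives
\[
\sum_{x\in\set{0,1}^n}\sqrt{P(x)Q(x)}=\prod_{i=1}^n\Bigl(\sqrt{p_iq_i}+\sqrt{\bar p_i\bar q_i}\Bigr).
\]
Both of the outer bounds in the lemma are products over $i\in[n]$ of nonnegative quantities as well, so it suffices to prove, for each $a,b\in(0,1)$,
\[
\sqrt{\frac{1-(a-b)^2}{2}}\ \le\ \sqrt{ab}+\sqrt{\bar a\bar b}\ \le\ \sqrt{1-(a-b)^2},
\]
and then take the product over the $n$ coordinates.

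Next I would prove this coordinatewise inequality by squaring. Set $S=\sqrt{ab}+\sqrt{\bar a\bar b}\ge0$, so $S^2=ab+\bar a\bar b+2\sqrt{a\bar a\,b\bar b}$, and substitute $\bar a\bar b=1-a-b+ab$ to obtain $S^2=1-a-b+2ab+2\sqrt{a\bar a\,b\bar b}$. For the upper bound, a direct computation gives $1-(a-b)^2-S^2=a\bar a+b\bar b-2\sqrt{a\bar a\,b\bar b}=\bigl(\sqrt{a\bar a}-\sqrt{b\bar b}\bigr)^2\ge0$. For the lower bound, discard the nonnegative cross term to get $S^2\ge ab+\bar a\bar b$, and then check the algebraic identity $2(ab+\bar a\bar b)-\bigl(1-(a-b)^2\bigr)=(1-a-b)^2\ge0$, which gives $S^2\ge\tfrac12\bigl(1-(a-b)^2\bigr)$. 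Taking nonnegative square roots and multiplying over the coordinates completes the proof of the lemma.

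I do not expect a genuine obstacle here: the only real content is recognizing the two sum-of-squares identities, $\bigl(\sqrt{a\bar a}-\sqrt{b\bar b}\bigr)^2$ and $(1-a-b)^2$, that make the coordinatewise bounds transparent; everything else is routine. As a bookkeeping note for the application, Theorem~\ref{thm:ub} then follows from the upper bound of this lemma together with $\min(u,v)\le\sqrt{uv}$ and Scheff\'e's identity \eqref{eq:lecam-scheffe} applied to $P=\Ber(\psi)$ and $Q=\Ber(\bar\eta)$, using $1-(\psi_i-\bar\eta_i)^2=(\psi_i+\eta_i)(2-\psi_i-\eta_i)$.
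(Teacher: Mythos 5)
Your proof is correct and essentially the same as the paper's: the core content in both is the single-coordinate inequality $\sqrt{\tfrac{1-(a-b)^2}{2}} \le \sqrt{ab}+\sqrt{\bar a\bar b} \le \sqrt{1-(a-b)^2}$, established via the same perfect-square identities $(\sqrt{a\bar a}-\sqrt{b\bar b})^2\ge 0$ and $(1-a-b)^2\ge 0$. The only difference is presentational: you invoke the Bhattacharyya-coefficient factorization $\sum_x\sqrt{P(x)Q(x)}=\prod_i(\sqrt{p_iq_i}+\sqrt{\bar p_i\bar q_i})$ directly, whereas the paper reaches the same reduction by induction on $n$; your route is slightly more economical.
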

\begin{proof}
We 
prove both inequalities
by induction on $n$, starting with the second.
The base case, $n=1$, amounts to showing that
\beqn
\label{eq:base}
\sqrt{st}+\sqrt{(1-s)(1-t)}
&\le&
\sqrt{
1-(s-t)^2
},
\qquad
s,t\in(0,1).
\eeqn
Squaring both sides, \eqref{eq:base} is equivalent to
\beq
1-s-t+2st
+2 \sqrt{st(1-s) (1-t)}
\le
1-(s-t)^2
,
\eeq
which, after canceling like terms, simplifies to
\beqn
\label{eq:base-sim}
2 \sqrt{st(1-s) (1-t)}
\le
s-s^2+t-t^2.
\eeqn
Denoting the right-hand-side of \eqref{eq:base-sim} by $R$
and the left-hand-side by $L$, we compute
\beq
R^2-L^2 &=& (s-s^2-t+t^2)^2 \ge 0,
\eeq
which proves \eqref{eq:base}.
Now we assume that the claim holds for some $n=k$
and consider the case $n=k+1$:
\beq
\sum_{x\in\set{0,1}^k,y\in\set{0,1}}
\sqrt{P(x,y) Q(x,y)}
&=&
\sum_{x\in\set{0,1}^k}
\sqrt{P(x,0) Q(x,0)}
+
\sum_{x\in\set{0,1}^k}
\sqrt{P(x,1) Q(x,1)}
\\
&=&
\sum_{x\in\set{0,1}^k}
\sqrt{P(x) Q(x)
(1-p_{k+1})(1-q_{k+1})
}
\\
&+&
\sum_{x\in\set{0,1}^k}
\sqrt{P(x) Q(x)
p_{k+1}q_{k+1}
}
.
\eeq
Now apply the inductive hypothesis to each term:
\beq
\sum_{x\in\set{0,1}^k}
\sqrt{P(x) Q(x)
p_{k+1}q_{k+1}
}
&=&
\sqrt{
p_{k+1}q_{k+1}
}
\sum_{x\in\set{0,1}^k}
\sqrt{P(x) Q(x)
}
\\
&\le&
\sqrt{
p_{k+1}q_{k+1}
\prod_{i=1}^k 
[1-(p_i-q_i)^2]
}
\eeq
(the analogous bound holds for the other term).
Putting $s=
p_{k+1}$, 
$t=q_{k+1}$,
and $K=
\prod_{i=1}^k 
[1-(p_i-q_i)^2]
$,
we obtain
\beq
\sum_{x\in\set{0,1}^{k+1}}
\sqrt{P(x) Q(x)}
&\le&
\sqrt{st K}
+
\sqrt{(1-s)(1-t) K}
\\&\le&
\sqrt{
(1+s-t)(1+t-s)K
}
=
\sqrt{
\prod_{i=1}^{k+1} 
[1-(p_i-q_i)^2]
}
,
\eeq
where \eqref{eq:base} was invoked in the second inequality.
This proves the 
upper bound on
$
\sum
\sqrt{P(x) Q(x)}
$.

The lower bound proceeds in an entirely analogous fashion,
only with
\beqn
\label{eq:base'}
\sqrt{st}+\sqrt{(1-s)(1-t)}
&\ge&
\sqrt{
\frac{
1-(s-t)^2
}2
},
\qquad
s,t\in(0,1)
\eeqn
as the base case instead of \eqref{eq:base}. To prove
\eqref{eq:base'}, recall that $\sqrt{u+v}\le\sqrt{u}+\sqrt{v}$
for $u,v\ge0$ to obtain the stronger inequality
\beq
\sqrt{st+(1-s)(1-t)}
&\ge&
\sqrt{
\frac{
1-(s-t)^2
}2
}.
\eeq
Squaring both sides and collecting terms yields
the equivalent (and obviously true)
$
 (1 - s - t)^2\ge0
$. From here, the induction proceeds exactly
as in the upper bound: 
we put
$s=
p_{k+1}$, 
$t=q_{k+1}$,
and $K=
\prod_{i=1}^k 
[1-(p_i-q_i)^2]
$,
and repeat the steps therein with the inequality appropriately reversed.

\end{proof}

\begin{proof}[of Theorem \ref{thm:ub}]
By Theorem \ref{thm:errp},
$\P(f\opt\neq Y)
=
\nrm{\theta P \wedge \bar{\theta} Q}_1
$.
Now
since $a\wedge b\le \sqrt{ab}$
for $a,b
\ge0
$, we have
\beqn
\label{eq:min<sqrt}
\sum_{x\in\set{0,1}^n}
P'(x)\wedge Q'(x)
&\le&
\sum_{x\in\set{0,1}^n}
\sqrt{P'(x) Q'(x)}
\eeqn
for all positive measures
$P',Q'$ on $\set{0,1}^n$.
Setting $P' = \theta P$ and $Q' = \bar{\theta} Q$,
we now have that 
\beqn
\sum_{x\in\set{0,1}^n}
\theta P(x)\wedge \bar{\theta} Q(x)
&\le&
\sqrt{\theta \bar{\theta}}
\sum_{x\in\set{0,1}^n}
\sqrt{P(x) Q(x)}.
\eeqn
Applying the upper bound in
Lemma~\ref{lem:ub-lb-pq} with $p_i=\psi_i$
and $q_i=1-\eta_i$
and noting that
$
1-(\psi_i-\bar\eta_i)^2
=
4\pi_i\bar\pi_i
$
completes the proof.
\end{proof}

\subsection{Proof of Theorem \ref{thm:lb}}
\noindent
As
$\P(f\opt\neq Y)
=
\nrm{\theta P \wedge \bar{\theta} Q}_1
$,
we start by writing
\begin{align*}
\nrm{\theta P \wedge \bar{\theta} Q}_1
\ge
\min \set{\theta,\bar{\theta}} 
\nrm{P \wedge Q}_1
=
\min \set{\theta,\bar{\theta}} 
\nrm{\Ber(\psi)\wedge\Ber(\bar\eta)}_1,
\end{align*}
where we used the pointwise inequality
\begin{align*}   
\min \set{\lambda u , \bar{\lambda}v} 
\ge 
\min \set{\lambda,\bar{\lambda}} 
\min \set{u,v}.
\end{align*}

\noindent
Next, we invoke Lemma \ref{lem:min-tens} with 
$P=\Ber(\psi)$
and
$Q=\Ber(\bar\eta)$ to obtain
\beq
\nrm{\Ber(\psi)\wedge\Ber(\bar\eta)}_1
&\ge&
\prod_{i=1}^n
\nrm{\Ber(\psi_i)\wedge\Ber(\bar\eta_i)}_1
\\
&=&
\prod_{i=1}^n
\sqprn{
\psi_i\wedge\bar\eta_i
+
\bar\psi_i\wedge\eta_i
}
\\
&\ge&
\prod_{i=1}^n
2(\pi_i\wedge\bar\pi_i),
\eeq
where the last inequality is due to \eqref{eq:stumin}.
By \eqref{eq:uvmin},
\beq
\pi_i\wedge\bar\pi_i
&=&
\sqrt{\pi_i\bar\pi_i}\exp\paren{-\frac12
\abs{\log\frac{\pi_i}{\bar\pi_i}}
},
\eeq
whence
\beq
\prod_{i=1}^n
2(\pi_i\wedge\bar\pi_i)
&=&
2^n
\sqrt{
\prod_{i=1}^n
\pi_i\bar\pi_i
}
\cdot
\exp\paren{-\frac12{\sum_{i=1}^n 
\abs{\log\frac{\pi_i}{\bar\pi_i}}
}}
.
\eeq
This proves the claim.
\qed

\subsection{Proof of Theorem \ref{thm:lb-symm}}
\noindent
\track{
Repeating the argument from Theorem \ref{thm:lb},
we have 
\begin{align}
\P(f\opt\neq Y)
\ge 
\min \set{\theta,\bar{\theta}} 
\nrm{\Ber(\psi)\wedge\Ber(\bar\eta)}_1.  
\end{align}
}
Setting $\psi_i = \eta_i = p_i$
we 
invoke \eqref{eq:uvmin} to obtain
\beq
\nrm{\Ber(p)\wedge\Ber(\bar p)}_1
&=&
\sum_{x\in\set{0,1}^n}
P(x)\wedge Q(x)
\\
&=&
\sum_{x\in\set{0,1}^n}
\sqrt{P(x) Q(x)}
\exp\paren{-\frac12\abs{\log\frac{P(x)}{Q(x)}}}
\\
&=&
\sqrt{
\prod_{i=1}^n
p_i\bar p_i
}
\sum_{x\in\set{0,1}^n}
\exp\paren{-\frac12\abs{\log\frac{P(x)}{Q(x)}}}
\\
&=&
2^n
\sqrt{
\prod_{i=1}^n
p_i\bar p_i
}
\E_{Z\sim\mathrm{Uniform}\set{0,1}^n}
\exp\paren{-\frac12\abs{\log\frac{P(Z)}{Q(Z)}}}
\\
&\ge&
2^n
\sqrt{
\prod_{i=1}^n
p_i\bar p_i
}
\exp\paren{-\frac12\E_{Z
}\abs{\log\frac{P(Z)}{Q(Z)}}},
\eeq
where Jensen's inequality was used in the last step.
Since $P,Q$ are product measures, we have
$P(Z)=\prod_{i=1}^n P_i(Z_i)$
and
$Q(Z)=\prod_{i=1}^n Q_i(Z_i)
=\prod_{i=1}^n \bar P_i(Z_i)
$, whence
\beq
\E
\abs{\log\frac{P(Z)}{Q(Z)}}
&\le&
\sqrt{
\E
\sqprn{
\paren{\log\frac{P(Z)}{Q(Z)}}^2
}}
\\
&=&
\sqrt{
\E
\sqprn{
\paren{
\sum_{i=1}^n
\log\frac{P_i(Z_i)}{
\bar P
_i(Z_i)}}^2
}}
\\
&=&
\sqrt{
\E
\sqprn{
\sum_{i,j\in[n]}
\log\frac{P_i(Z_i)}{
\bar P
_i(Z_i)}
\log\frac{P_j(Z_j)}{
\bar P
_j(Z_j)}
}}.
\eeq
Since $\E
\log\frac{P_i(Z_i)}{
\bar P
_i(Z_i)}=0$
and the $Z_i$ are independent,
only the diagonal terms survive:
\beq
\E
\sqprn{
\sum_{i,j\in[n]}
\log\frac{P_i(Z_i)}{
\bar P
_i(Z_i)}
\log\frac{P_j(Z_j)}{
\bar P
_j(Z_j)}
}
&=&
\sum_{i=1}^n
\E\abs{
\log\frac{P_i(Z_i)}{
\bar P
_i(Z_i)}}^2
\\&=&
\sum_{i=1}^n\paren{\log\frac{p_i}{\bar p_i}}^2.
\eeq
The proof is complete.
\qed

\subsection{Auxiliary Lemmata}

The following 
identity and inequality
are elementary:
\beqn
u\wedge v
&=&
\sqrt{uv}\exp\paren{-\frac12\abs{\log\frac{u}{v}}},
\qquad
u,v>0,
\label{eq:uvmin}
\\
\label{eq:stumin}
s\wedge\bar t+t\wedge\bar s
&\ge& 2(u\wedge\bar u),
\qquad
s,t\in[0,1],
u=
(s+t)/2
.
\eeqn

\begin{lemma}
\label{lem:min-tens}
For all probability measures
$P,Q,P',Q'$,
we have
\beq
\nrm{
P\otimes Q 
\wedge
P'\otimes Q'
}_1
&\ge&
\nrm{
P
\wedge
P'
}_1
\cdot
\nrm{
Q 
\wedge
Q'
}_1
.
\eeq
\end{lemma}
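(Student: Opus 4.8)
The plan is to reduce the tensorized statement to a single product measure and exploit the multiplicative structure of the minimum. First I would write everything explicitly on the product space: if $P,P'$ live on $\Omega_1$ and $Q,Q'$ on $\Omega_2$, then $(P\otimes Q)(x,y)=P(x)Q(y)$ and similarly for the primed version, so
\[
\nrm{P\otimes Q\wedge P'\otimes Q'}_1
=\sum_{x\in\Omega_1}\sum_{y\in\Omega_2}\min\set{P(x)Q(y),\,P'(x)Q'(y)}.
\]
The key elementary inequality is that for nonnegative reals, $\min\set{ac,bd}\ge\min\set{a,b}\cdot\min\set{c,d}$: indeed, $ac\ge\min\set{a,b}\min\set{c,d}$ and $bd\ge\min\set{a,b}\min\set{c,d}$ both hold termwise. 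Applying this with $a=P(x)$, $b=P'(x)$, $c=Q(y)$, $d=Q'(y)$ gives
\[
\min\set{P(x)Q(y),\,P'(x)Q'(y)}\ge\min\set{P(x),P'(x)}\cdot\min\set{Q(y),Q'(y)}
\]
for every pair $(x,y)$.

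Summing this bound over all $x$ and $y$ and using that the double sum of a product factors,
\[
\nrm{P\otimes Q\wedge P'\otimes Q'}_1
\ge\Bigl(\sum_{x\in\Omega_1}\min\set{P(x),P'(x)}\Bigr)\Bigl(\sum_{y\in\Omega_2}\min\set{Q(y),Q'(y)}\Bigr)
=\nrm{P\wedge P'}_1\cdot\nrm{Q\wedge Q'}_1,
\]
which is exactly the claimed inequality. An iterate of this two-factor statement then yields the $n$-fold version $\nrm{\Ber(\psi)\wedge\Ber(\bar\eta)}_1\ge\prod_i\nrm{\Ber(\psi_i)\wedge\Ber(\bar\eta_i)}_1$ used in the proof of Theorem~\ref{thm:lb}.

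I do not expect any genuine obstacle here: the only content is the termwise inequality $\min\set{ac,bd}\ge\min\set{a,b}\min\set{c,d}$, and the rest is interchanging sums. The one point worth stating cleanly is that the argument needs only nonnegativity of the four measures, not that they are probability measures, so it applies verbatim to the positive (sub-probability) measures that arise after conditioning; the normalization is irrelevant to the inequality.
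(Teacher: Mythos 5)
Your proof is correct, and it takes a genuinely different route from the paper's. The paper's argument is a two-step reduction: it cites a classic tensorization inequality for total variation, $\tv{P\otimes Q - P'\otimes Q'}\le\tv{P-P'}+\tv{Q-Q'}-\tv{P-P'}\cdot\tv{Q-Q'}$, and then converts it into the stated form via Scheff\'e's identity $\nrm{P\wedge Q}_1=1-\tv{P-Q}$. Your argument is a one-step direct proof: the pointwise bound $\min\set{ac,bd}\ge\min\set{a,b}\min\set{c,d}$ for nonnegative reals, summed over the product space with Fubini. In effect you are reproving the ``classic fact'' from scratch in its Scheff\'e-dual form, which makes the lemma self-contained and removes the dependence on both the cited reference and on Scheff\'e's identity. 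Your closing observation that only nonnegativity (not normalization) is used is also correct and slightly strengthens the statement; the paper's route, by contrast, genuinely uses that $P,Q,P',Q'$ are probability measures, since Scheff\'e's identity is stated for probability measures. The trade-off is that the paper's proof is shorter on the page (one citation plus one line), while yours is elementary and self-verifying.
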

\begin{proof}
It is a classic fact (see, e.g., \citet[Lemma 2.2]{kontorovich12})
that
\beq
\tv{
P\otimes Q 
-
P'\otimes Q'
}
&\le&
\tv{P-P'}
+
\tv{Q-Q'}
-
\tv{P-P'}
\cdot
\tv{Q-Q'}
.
\eeq
The
claim follows
by
Scheff\'e's identity \eqref{eq:lecam-scheffe}.
\end{proof}

\section*{Acknowledgments}
We thank 
Daniel Berend, 
Lior Daniel,
Ariel Jaffe,
Douglas Hubbard, 
Sudeep Kamath,
Mark Kozlenko, 
Kuldeep Meel,
Boaz Nadler, and Rotem Zur for enlightening discussions.

\bibliography{refs}

\end{document}